\theoremstyle{plain}
\newtheorem{thm}{Theorem}
\newtheorem{lemma}[thm]{Lemma}
\newtheorem{question}[thm]{Question}
\theoremstyle{definition}
\newtheorem{claim}{Claim}
\newcommand*{\myproofname}{Proof}
\newenvironment{claimproof}[1][\myproofname]{\begin{proof}[#1]}{\end{proof}}
\title{Erd\H{o}s-P\'osa from ball packing}
\author{
Wouter Cames van Batenburg
\thanks{D\'epartement d'Informatique, 
Universit\'e Libre de Bruxelles, 
Brussels, Belgium.
Email: \protect\href{mailto:wcamesva@ulb.ac.be}{\protect\nolinkurl{wcamesva@ulb.ac.be}}, \protect\href{mailto:gjoret@ulb.ac.be}{\protect\nolinkurl{gjoret@ulb.ac.be}}. Supported by an ARC grant from the Wallonia-Brussels Federation of Belgium.}
\and
Gwena\"el Joret
\footnotemark[1]
\and
Arthur Ulmer
\thanks{Institut f\"ur Optimierung und Operations Research, Universit\"at Ulm, Germany. Email: \protect\href{mailto:arthur.ulmer@uni-ulm.de}{\protect\nolinkurl{arthur.ulmer@uni-ulm.de}}. Supported by DFG, grant no.\  BR 5449/1-1.}
}
\date{\today}
\begin{document}

\maketitle

\begin{abstract}
A classic theorem of Erd\H{o}s and P\'osa (1965) states that every graph has either $k$ vertex-disjoint cycles or a set of $O(k \log k)$ vertices meeting all its cycles. 
While the standard proof revolves around finding a large `frame' in the graph (a subdivision of a large cubic graph), an alternative way of proving this theorem is to use a ball packing argument of K\"uhn and Osthus (2003) and Diestel and Rempel (2005). 
In this paper, we argue that the latter approach is particularly well suited for studying edge variants of the Erd\H{o}s-P\'osa theorem.

As an illustration, we give a short proof of a theorem of Bruhn, Heinlein, and Joos (2019), that cycles of length at least $\ell$ have the so-called edge-Erd\H{o}s-P\'osa property. More precisely, we show that every graph $G$ either contains $k$ edge-disjoint cycles of length at least $\ell$ or an edge set $F$ of size $O(k\ell \cdot \log (k\ell))$ such that $G-F$ has no cycle of length at least $\ell$. For fixed $\ell$, this improves on the previously best known bound of $O(k^2 \log k +k\ell)$.
\end{abstract}

\section{Introduction}

By a classic theorem of Erd\H{o}s and P\'osa~\cite{Erdos1965independent}, every graph contains either $k$ vertex-disjoint cycles or a set of $O(k \log k)$ vertices that meets every cycle. This result has sparked a vast body of generalizations and variations regarding the so-called Erd\H{o}s-P\'osa property. 
A class of graphs $\mathcal{H}$ is said to have the \emph{vertex-Erd\H{o}s-P\'osa property} (resp.\ \emph{edge-Erd\H{o}s-P\'osa property}) if there exists a \emph{bounding function} $f(k): \mathbb{N}\rightarrow \mathbb{R}_{\geq 0}$ such that for all $k\in \mathbb{N}$, every graph $G$ contains either $k$ vertex-disjoint (resp.\ edge-disjoint) subgraphs in $\mathcal{H}$, or a set $F$ of at most $f(k)$ vertices (resp.\ edges) such that $G-F$ has no subgraph in $\mathcal{H}$. Henceforth, we will abbreviate the name of this property to \emph{vertex-EP-property} (resp.\ \emph{edge-EP-property}). 

The result of Erd\H{o}s and P\'osa implies that the class of cycles in fact has both the vertex and the edge-EP property (cp.~\cite{Diestel}). Despite this, most research since then has focused on the vertex-EP property.
For instance, by a seminal result of Robertson and Seymour~\cite{robertson1986graph}, the class of $J$-expansions has the vertex-EP property if and only if $J$ is a planar graph. Here, given a graph $J$, a \emph{$J$-expansion} is a graph that contains $J$ as a minor. Recently~\cite{CHJR19} it has been established that the expansions of any given planar graph $J$ have bounding function $O (k \log k)$, where the hidden constant depends on the size of $J$. 
This is best possible when $J$ has a cycle (if $J$ is a forest then a $O(k)$ bound holds~\cite{FJW2013}). 
Note that the Erd\H{o}s-P\'osa theorem can be viewed as a special case of this result, since each cycle is a $C_3$-expansion and every $C_3$-expansion contains a cycle as a subgraph. 
 
While the vertex-EP property of expansions is now quite well understood, the edge-EP property has not been thoroughly investigated. Interest in the edge-EP property is more recent, starting with the works of Birmelé, Bondy, and Reed~\cite{BBR07} and that of Sau, Raymond, and Thilikos~\cite{RST16}. 
Since then, a series of papers have demonstrated the edge-EP property for $\theta_r$-expansions~\cite{CRST18} (where $\theta_r$ is the multigraph consisting of two vertices joined by $r$ parallel edges), for \emph{long cycles}~\cite{BHJ19} (i.e.: the cycles of length at least some given constant $\ell$) and for $K_4$-expansions~\cite{BH18}. There has also been interest in the edge-EP property for labeled graphs~\cite{HU19, B19} and directed graphs~\cite{HU18}. From this account, one might gain the impression that the vertex-EP property and edge-EP property are essentially the same, and indeed sometimes the edge-EP property can be derived directly from its vertex variant~\cite{PW12}. However, in~\cite{BHJ18_counterexamples}, this intuition has been disproved in a strong sense: there exist planar graphs $J$ such that $J$-expansions do \emph{not} have the edge-EP property. In particular, this is the case when $J$ is a sufficiently large ladder or a subcubic tree with sufficiently large pathwidth. 
Finding a characterization for the planar graphs $J$ such that $J$-expansions have the edge-EP property remains an intriguing open problem. \\

In this paper, we develop a ball packing technique to establish the edge-EP property of some graph classes. 
As a warm-up, we first show how it yields a short self-contained proof of the Erd\H{o}s-P\'osa theorem for cycles, along the lines of Raymond and Thilikos~\cite{Raymond2017}; see Section~\ref{sec:classicEP}. 

After that, we apply the technique to obtain a relatively short proof that long cycles have the edge-EP property, with an improved bounding function. 

It has been long known that long cycles (of length at least $\ell$) have the \emph{vertex}-EP property; this follows for instance from~\cite{robertson1986graph}. 
The first polynomial bounding function is due to Birmelé, Bondy, and Reed~\cite{BBR07}, who obtained an $O(k^2\ell)$ bounding function.  
Then Fiorini and Herinckx~\cite{FH14} improved this to an $O(k\ell \log k)$ bounding function. Using a different approach, this has been refined by Mousset, Noever, and Škori\'c, and Weissenberger~\cite{MNSW17} to an $O(k \log k + k\ell)$ bounding function, which is best possible.

After this, Bruhn, Heinlein and Joos~\cite{BHJ19} established that cycles of length at least $\ell$ also have the \emph{edge}-EP property. They obtained an $O(k^2 \log k+ k\ell)$  bounding  function. Among other ingredients, they used the vertex-EP property, a reduction which roughly costs them a multiplicative factor $k$. In contrast, we do not require this reduction step; our technique is self-contained and can simultaneously yield the vertex and edge-EP property. This is the main reason we are able to prove an $O(k\ell \cdot \log (k\ell))$ bound:  

\begin{thm}\label{thm:longcycles}
For every integer $\ell\geq 2$, the edge-EP property holds for cycles of length at least $\ell$, with bounding function $8k(\ell-1)(\lceil\log_2(k\ell)\rceil+1)$.
\end{thm}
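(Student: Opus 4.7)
I would prove the theorem by induction on $k$, with base cases $k \in \{0, 1\}$ being trivial (if $G$ has no long cycle take $F = \emptyset$; otherwise $k=1$ is the single cycle). For $k \geq 2$, assume $G$ contains at least one long cycle but no $k$ edge-disjoint long cycles, and set $L := \lceil \log_2(k\ell) \rceil + 1$; the goal is a hitting set $F$ with $|F| \leq 8k(\ell-1)L$.

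Following the ball packing template of Section~\ref{sec:classicEP}, the heart of the proof is to exhibit a ball $B := B_{r^*}(v)$ around some vertex $v$ with the properties
\begin{itemize}
    \item[(i)] $B$ contains a cycle of $G$ of length at least $\ell$, and
    \item[(ii)] the set $F_0$ of edges of $G$ having at least one endpoint in $B$ has size at most $8(\ell-1)L$.
\end{itemize}
Given such a ball, removing $F_0$ disconnects $B$ from the rest of $G$. Since $B$ still contains a long cycle and $G$ has no $k$ edge-disjoint long cycles, the outside graph $G - B$ has packing at most $k-2$ (for otherwise, combining a long cycle inside $B$ with $k-1$ edge-disjoint long cycles in $G - B$ yields $k$ edge-disjoint long cycles in $G$, a contradiction). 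The inductive hypothesis applied to $G - B$ with parameter $k-1$ then yields a hitting set $F_{\text{out}}$ of size at most $8(k-1)(\ell-1) L'$, where $L' := \lceil \log_2((k-1)\ell) \rceil + 1 \leq L$. Setting $F := F_0 \cup F_{\text{out}}$ gives $|F| \leq 8(\ell-1)L + 8(k-1)(\ell-1)L = 8k(\ell-1)L$.

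To produce such a ball, I would pick a vertex $v$ on a shortest long cycle of $G$ and run BFS from $v$, tracking the \emph{edge-volume} $\phi(r)$, defined as the number of edges of $G$ with at least one endpoint in $B_r(v)$. A telescoping/pigeonhole argument across the $L$ intervals of radii $[j(\ell-1),(j+1)(\ell-1)]$ for $0 \leq j \leq L-1$ gives a dichotomy: either $\phi$ at least doubles across every such interval, in which case $\phi((\ell-1)L) \geq 2^L \geq k\ell$ and a dense-ball analysis should extract $k$ edge-disjoint long cycles inside $B_{(\ell-1)L}$, contradicting our hypothesis; or $\phi$ grows by strictly less than a factor of $2$ on some interval, and a further pigeonhole within this interval produces an intermediate radius $r^* \leq (\ell-1)L$ satisfying (i) and (ii).

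The main obstacle will be the dense-ball case: turning an edge-volume of at least $k\ell$ in a ball of radius $O((\ell-1)L)$ into $k$ genuinely \emph{edge-disjoint} long cycles. The naive approach of taking fundamental cycles of a BFS tree fails because these typically share tree edges, so a more careful ``staircase'' construction along BFS layers, or an iterative extraction combined with careful reaccounting of edges, seems necessary. A secondary technical point is tuning the pigeonhole constants so that the resulting $F_0$ has size exactly $8(\ell-1)L$, and performing any preliminary reductions (e.g.\ bounding the length of a shortest long cycle through $v$, or restricting to $2$-edge-connected subgraphs) that are needed to start the BFS with a ball already containing a long cycle at reasonable radius.
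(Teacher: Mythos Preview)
Your plan diverges from the paper's and, as written, has two genuine gaps that I do not see how to close.

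\textbf{Gap 1 (sparse case).} You need a ball $B_{r^*}(v)$ with $r^*\le(\ell-1)L$ that simultaneously (i) contains a long cycle and (ii) has at most $8(\ell-1)L$ incident edges. But if the shortest long cycle through $v$ has length $c$, then any ball containing it already has $\phi\ge c$; so condition~(ii) forces $c\le 8(\ell-1)L=g(k)$. In other words, your argument only applies when $G$ has a long cycle of length at most $g(k)$, which is precisely the trivial case the paper disposes of in one line (delete its edges and induct). In the hard case---every long cycle has length $>g(k)$---no ball of radius $\le(\ell-1)L$ around $v$ can contain a long cycle at all, so property~(i) fails and your induction never gets started. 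Your ``further pigeonhole within this interval'' does not address this: slowing edge-growth says nothing about the presence of a long cycle inside the ball.

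\textbf{Gap 2 (dense case).} You yourself flag this, but it is more serious than a technicality. An edge count of $k\ell$ inside a ball of radius $(\ell-1)L$ does not yield $k$ edge-disjoint long cycles, or even one: the ball could be a tree plus many short cycles. Neither BFS fundamental cycles nor a staircase construction will manufacture long cycles that are not there.

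\textbf{What the paper actually does.} The paper first reduces to the hard case (every long cycle has length $>g(k)$), and then uses balls in the opposite way: it takes a maximal $g(k)/4$-packing $X$, grows the balls $H(x)$ so that each has diameter $<g(k)/2$, and uses Lemma~\ref{lem:diameter} to conclude that each $H(x)$ contains \emph{no} long cycle. The work is then a block-tree analysis of $H(x)$ (Claims~\ref{clm:0}--\ref{clm:3} and the key Claim~\ref{clm:consecutiveblocks}) showing that $H(x)$ has at least $k\ell-1$ ``pre-leaves'', each forcing an edge to a distinct $H(y)$. Contracting the $H(x)$ gives a minor of minimum degree $\ge k\ell-1$, and Lemma~\ref{lem:disjointlongcycles} finishes. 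So the balls are used to build a high-degree minor, not to trap a long cycle; this is the idea your plan is missing.
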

We remark that for fixed $\ell$, an $O(k \log k)$ bound is best possible. 
On the other hand, if we let $\ell$ vary, then there might still be some room for improvement. 
It can be shown~\cite{BHJ19} that it is not possible to improve the bound of Theorem~\ref{thm:longcycles} below $O(k \log k + k\ell)$ just as in the vertex case, but we are not aware of any better lower bound. \\

The paper is organised as follows. 
First, a sketch of the general approach is given in Section~\ref{sec:sketch}. 
Then we use it in Section~\ref{sec:classic_EP} to give an alternative proof of the Erd\H{o}s-P\'osa theorem, and in Section~\ref{sec:long_cycles} to prove Theorem~\ref{thm:longcycles}. 
Finally, we conclude the paper with some open problems (Section~\ref{sec:open_problems}).

\section{A sketch of the proof and beyond}
\label{sec:sketch}

We now provide a sketch of the general proof scheme used for Theorems~\ref{thm:longcycles} and~\ref{thm:vertexcycle}, and how this might be useful to prove the edge-EP property for other graph classes as well. \\

Suppose we are given a fixed graph $J$ and we want to prove the edge-EP property for the class of $J$-expansions, with an $O(k \log k)$ bounding function. Let $k \in \mathbb{N}$ and let $G$ be a graph. The first step is to apply induction on the number of edges of $G$. If $G$ has a subgraph $H$ which is a small $J$-expansion, of size at most $O (\log k)$, then one can apply a straightforward induction to the graph $G-E(H)$. Hence we may assume that each $J$-expansion in $G$ is large, of size $\Omega(\log k)$. 
Similarly, if we can apply a reduction operation on $G$, then we are done by induction, so we may assume that $G$ is {\em reduced}. 
Here, the reduction operations depend on $J$ (for instance, removing vertices of degree-$1$ and suppressing vertices of degree-$2$ if $J$ is $C_3$).  
Finally, we partition the vertex set $V(G)$ of our reduced graph $G$ into sets that induce connected subgraphs of diameter proportional to $\log k$; in fact these subgraphs can be chosen to approximately equal balls of some uniform radius $r=O(\log k)$. For simplicity, we call them \emph{balls} in the remainder of this proof sketch. Using that small $J$-expansions do not occur as a subgraph, we then need to derive that each ball grows exponentially, in the sense that the number of vertices on its boundary is exponential in $r$. 
This ensures that the boundary of each ball has size $\Omega(k^{1+\epsilon})$, for some $\epsilon$ depending on the size of $J$ but independent of $k$. We also need that sufficiently many vertices on the boundary have a neighbour in some other ball, and that between any two balls there are only `few' edges. This will ensure that every ball sends an edge to $\Omega(k^{1+\epsilon})$ distinct other balls. 
Once this has been established, we can contract each ball to a vertex, thus obtaining a minor of $G$ with 
 minimum degree $\Omega(k^{1+\epsilon })$ after removing parallel edges.
 This in turn implies that $G$ has a large clique minor (a clique of order at least $k \cdot |V(J)|$) and hence $G$ contains $k$ vertex-disjoint (and thus also edge-disjoint) $J$-expansions, as desired. This concludes the sketch of the general method.\\
  
Finding adequate reduction operations is of course problem dependent (i.e.\ it depends on $J$). 
The main challenge with this method seems to be proving that the ball boundaries are uniformly increasing as a function of their radius once the graph is reduced; ideally exponentially in the radius. 
This is not difficult in the case of $J=C_3$, because then one can deduce that the balls must induce trees where each internal vertex has degree at least $3$, see Section~\ref{sec:classicEP}. In the case where $J$ is some larger cycle $C_\ell$, the balls do not necessarily induce trees, but it is still possible to reduce to the case where all balls globally possess some tree-like structure, yielding the desired growth behaviour. This step is the bulk of the proof of Theorem~\ref{thm:longcycles}. \\

We wish to reiterate that \emph{if} this method works for a given graph $J$, then essentially the same proof also yields the vertex-EP property. Moreover, if the balls exhibit exponential growth, then this method yields the optimal $O(k \log k)$ bound. If the balls grow slower, e.g. polynomially in the radius, then the method will still yield the edge-EP property, albeit with a larger bounding function.\\

\section{Classic Erd\H{o}s-P\'osa}\label{sec:classicEP}
\label{sec:classic_EP}

In this section, we present the aforementioned proof of the Erd\H{o}s-P\'osa theorem for cycles, as a warm-up for the proofs presented in Section~\ref{sec:long_cycles}. It is along the lines of the proof given in~\cite{Raymond2017}. Our main tool is the following powerful lemma of K\"uhn and Osthus~\cite{KO03}, see also Diestel and Rempel~\cite{DR05}.  Its short proof is included for completeness.

Given two vertices $u,v$ in a graph $G$, we let $d_G(u,v)$ denote the distance between $u$ and $v$ in $G$. 
\begin{lemma}[K\"uhn and Osthus~\cite{KO03}]\label{lem:mindegreeminor}
Let $m\geq 0$ and $d\geq 3$ be integers. Then every graph $G$ of girth at least $8m+3$ and minimum degree $d$ contains a minor of minimum degree at least $d(d-1)^m$.
\end{lemma}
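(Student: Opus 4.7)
The plan is to build the required minor of $G$ by contracting a Voronoi-like partition around a well-separated set of centers; the high girth is used both to guarantee that small balls in $G$ are tree-like and of the right size, and to prevent the contraction from creating too many parallel edges. The first step is to show that, for every $v\in V(G)$ and every $r\leq 2m$, the induced subgraph $G[B_r(v)]$ is a tree, where $B_r(v)$ denotes the ball of radius $r$ around $v$. Indeed, any cycle inside $G[B_r(v)]$ would have length at most $2r+1\leq 4m+1<8m+3$, contradicting the girth bound. Combined with the minimum degree hypothesis, this yields $|B_m(v)|\geq 1+d\sum_{i=0}^{m-1}(d-1)^i$.

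Next, I pick a maximal set $U\subseteq V(G)$ of pairwise $G$-distance strictly greater than $2m$, and assign each vertex $w\in V(G)$ to a branch set $A_u$, where $u\in U$ is the closest element of $U$ to $w$ (with a consistent tiebreaking rule based on a fixed ordering of $U$, so that shortest paths to the chosen center remain inside the branch set). By the maximality of $U$, this produces a partition of $V(G)$ into connected sets $(A_u)_{u\in U}$ with $B_m(u)\subseteq A_u\subseteq B_{2m}(u)$. Contracting each $A_u$ to a single vertex $x_u$ yields a minor $H$ of $G$, and the goal is to prove $\deg_H(x_u)\geq d(d-1)^m$ for every $u\in U$.

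Here the girth enters twice more. First, at most one edge of $G$ can connect $A_u$ to $A_{u'}$ whenever $u\neq u'$: two such edges together with paths of length at most $4m$ inside each branch set would yield a cycle of length at most $8m+2$, contradicting the girth hypothesis. Hence $\deg_H(x_u)$ equals the number of edges of $G$ leaving $A_u$. Second, since $G[A_u]$ is a connected induced subgraph of the tree $G[B_{2m}(u)]$, it is itself a tree with $|A_u|-1$ edges, so the number of edges leaving $A_u$ equals $\sum_{v\in A_u}\deg_G(v)-2(|A_u|-1)\geq (d-2)|A_u|+2\geq (d-2)|B_m(u)|+2$. Writing $(d-2)(d-1)^i=(d-1)^{i+1}-(d-1)^i$ and telescoping turns the lower bound on $|B_m(u)|$ into $(d-2)|B_m(u)|+2\geq d(d-1)^m$, as required.

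The main obstacle I anticipate is the Voronoi setup: verifying that each $A_u$ is connected and lies inside $B_{2m}(u)$ requires a consistent tiebreaking rule together with a short path-staying-in-cell argument (taking a shortest path from $w\in A_u$ to $u$, each intermediate vertex is still closest to $u$, with the same tiebreak). Once this structural data is in place, the short-cycle argument between cells and the tree-based edge count within each cell fit together cleanly within the girth budget of $8m+3$.
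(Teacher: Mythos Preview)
Your argument is correct and is essentially the paper's proof: choose a maximal $(2m{+}1)$-separated set, partition $V(G)$ into connected cells lying inside the balls $B_{2m}(u)$ (you use a Voronoi rule with a fixed tiebreak, the paper uses a greedy layer-by-layer extension), use girth $\geq 4m+2$ to make each cell an induced tree, use girth $\geq 8m+3$ to ensure at most one $G$-edge between any two cells, and then count edges leaving a cell (you via the handshake identity $(d-2)|A_u|+2\geq d(d-1)^m$, the paper via the fact that the tree has $\geq d(d-1)^{m-1}$ leaves each sending $\geq d-1$ edges out). One small phrasing slip: it is not true that \emph{every} cycle in $G[B_r(v)]$ has length at most $2r+1$; what you need (and what holds) is that if $G[B_r(v)]$ is not a tree then a BFS from $v$ yields a fundamental cycle of length at most $2r+1$, contradicting the girth bound.
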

\begin{proof}
This proof is taken over literally from~\cite{KO03}. 
The lemma is obvious if $m=0$, so let us assume $m \geq 1$. 
For each $x\in V(G)$ and integer $r\geq 0$, let $B_r(x):= \left\{ u \in V(G) \mid d_G(x,u) \leq r \right\}$ denote the ball of radius $r$ centered at $x$. Let $X$ be a maximal set of vertices of $G$ that have pairwise distance at least $2m+1$ from each other.  Thus, for distinct $x,y\in X$, the balls $B_m(x)$ and $B_m(y)$ are disjoint. Extend the $(B_m(x))_{x\in X}$ to disjoint connected subgraphs of $G$ by first adding each vertex at distance $m+1$ from $X$ to one of the $B_m(x)$ to which it is adjacent. Then add each vertex at distance $m+2$ from $X$ to one of the subgraphs constructed in the previous step to which it is adjacent. Continue like this until each vertex of $G$ is contained in one of the constructed subgraphs and denote the subgraph obtained from $B_m(x)$ in this way by $T(x)$. The choice of $X$ implies that each vertex of $G$ has distance at most $2m$ from $X$, so each vertex of $T(x)$ has distance at most $2m$ from $x$ in $T(x)$. Therefore, as $G$ has girth at least $4m+2$, each $T(x)$ is an induced subtree of $G$. In particular, $B_m(x)$ is a tree in which every vertex that is not a leaf has degree at least $d$ and in which every  leaf has distance $m$ from $x$. So $B_m(x)$ and $T(x)$ have at least $d(d-1)^{m-1}$ leaves. Hence $T(x)$ sends at least $d(d-1)^m$ edges to vertices outside $T(x)$. Because $G$ has girth at least $8m+3$, two distinct trees $T(x), T(y)$ are joined by at most one edge. Thus the graph obtained from $G$ by contracting the trees $(T(x))_{x \in X}$ has minimum degree at least $d(d-1)^m $, as desired.
\end{proof}

 We will also need the following lemma.

\begin{lemma}\label{lem:mindegreepacking}
Let $k$ be a nonnegative integer and let $G$ be a graph of minimum degree at least $3k$. Then $G$ contains $k$ vertex-disjoint cycles.
\end{lemma}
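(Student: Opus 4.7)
The plan is to argue by induction on $k$. The base case $k=0$ is vacuous, so assume $k\geq 1$ and let $G$ be a graph with minimum degree at least $3k$. Since $3k\geq 3$, $G$ certainly contains a cycle; let $C$ be a shortest cycle of $G$. The strategy is to apply the induction hypothesis to $G-V(C)$, extract $k-1$ further vertex-disjoint cycles there, and combine them with $C$.

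For the induction to go through, I need $G-V(C)$ to have minimum degree at least $3(k-1)$. It therefore suffices to show that every vertex $v\in V(G)\setminus V(C)$ has at most three neighbors on $C$: then deleting $V(C)$ decreases every remaining degree by at most $3$, and we are done.

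This is the only real content of the proof, and it is where the shortest-cycle assumption comes in. Suppose towards a contradiction that some $v\notin V(C)$ has at least four neighbors on $C$, and list them as $c_1,\dots,c_t$ in cyclic order along $C$ with $t\geq 4$. These split $C$ into $t$ arcs, the shortest of which has at most $|C|/t\leq |C|/4$ edges. Concatenating this short arc with the two edges from $v$ to its endpoints yields a cycle through $v$ of length at most $|C|/4+2$, which is strictly less than $|C|$ since $|C|\geq 3$. This contradicts the minimality of $C$ and establishes the claim.

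The main obstacle is really just this shortest-cycle bound on the attachment of outside vertices to $C$; once it is in hand, a one-line invocation of the inductive hypothesis on $G-V(C)$ produces the required $k-1$ disjoint cycles, and adjoining $C$ finishes the argument. I do not expect any further technical difficulty, nor any interaction with the ball-packing machinery developed in Lemma~\ref{lem:mindegreeminor}: this lemma is the standalone combinatorial input that will be combined with that machinery downstream.
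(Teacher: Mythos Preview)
Your proposal is correct and follows exactly the same route as the paper: induction on $k$, take a shortest cycle $C$, show every vertex outside $C$ has at most three neighbours on $C$ (so $G-V(C)$ has minimum degree at least $3(k-1)$), and apply the induction hypothesis. The only difference is cosmetic---you spell out the pigeonhole/arc argument for the ``at most three neighbours'' claim, whereas the paper simply says ``otherwise there would be a shorter cycle''.
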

\begin{proof}
We proceed by induction on $k$. The statement is clearly true for $k=0$. Suppose $k\geq 1$. Since $G$ has minimum degree at least $3$, it has a cycle. Let $C$ be a shortest cycle in $G$. Every vertex in $V(G)\backslash V(C)$ has at most three neighbours in $V(C)$, since otherwise there would be a shorter cycle. Thus the minimum degree of $G-V(C)$ is at least $3(k-1)$ and so, by induction, there exists a collection of at least $k-1$ vertex-disjoint cycles in $G-V(C)$. Adding $C$ to this collection, we deduce that $G$ contains $k$ vertex-disjoint cycles.
\end{proof}

Now everything is ready for the proof. We provide it for the vertex-version and then remark which small changes need to be made for the edge-version. We define $g(0):=0$ and $g(k):=8 \lceil \log_2(k) \rceil+2$, for every positive integer $k$.

\begin{thm}\label{thm:vertexcycle}
For every graph $G$ and every nonnegative integer $k$, either $G$ contains at least $k$ vertex-disjoint cycles, or there is a set $F\subseteq V(G)$ of size at most $k\cdot g(k)$ such that $G-F$ has no cycle. 
\end{thm}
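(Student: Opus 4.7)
The plan is to argue by induction on $|V(G)|+|E(G)|$, combining the two lemmas above. The trivial cases are $k=0$ and $G$ acyclic, both handled by taking $F=\emptyset$. Hence I assume $k\geq 1$ and $G$ has a cycle. The first phase would consist of two graph reductions that together let me assume $G$ has minimum degree at least $3$. A vertex of degree $\leq 1$ is used by no cycle and can simply be deleted. A vertex $v$ of degree exactly $2$ I would \emph{suppress}: delete $v$ and insert an edge between its two neighbours, allowing loops or parallel edges if necessary. Cycles of the resulting multigraph $G'$ biject naturally with cycles of $G$ in a manner that preserves vertex-disjointness and maps hitting sets of $G'$ to hitting sets of $G$, so the induction hypothesis applied to $G'$ at the same $k$ yields the desired conclusion for $G$.

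Set $m:=\lceil \log_2 k\rceil$, so that $g(k)=8m+2$. The second phase is a short-cycle reduction. If $G$ contains a cycle of length at most $g(k)$, pick a shortest such cycle $C$ and apply the induction hypothesis to $G-V(C)$ with parameter $k-1$. Either $G-V(C)$ contains $k-1$ vertex-disjoint cycles, which together with $C$ yield $k$ vertex-disjoint cycles of $G$, or there is a hitting set $F'$ of $G-V(C)$ of size at most $(k-1)g(k-1)\leq (k-1)g(k)$. Then $F:=F'\cup V(C)$ is a hitting set of $G$ of size at most $(k-1)g(k)+g(k)=k\cdot g(k)$, as required.

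After both phases the remaining graph $G$ has minimum degree at least $3$ and girth at least $8m+3$. At this point I would invoke Lemma~\ref{lem:mindegreeminor} with $d=3$ to obtain a minor of $G$ of minimum degree at least $3\cdot 2^m\geq 3k$, and then Lemma~\ref{lem:mindegreepacking} to produce $k$ vertex-disjoint cycles in this minor, which lift through the branch sets to $k$ vertex-disjoint cycles of $G$.

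The main delicate point, I expect, is the suppression step: it may create loops and parallel edges, that is, cycles of length $1$ or $2$ not present in $G$. This is however harmless, because such cycles are shorter than $g(k)$ for every $k\geq 1$, and will therefore be absorbed by the short-cycle branch at the very next invocation of the induction. Consequently the argument should be carried out in the category of multigraphs throughout, which is also convenient for the minor-taking step at the end.
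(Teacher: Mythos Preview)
Your proposal is correct and follows essentially the same approach as the paper's proof: induction, short-cycle removal, reduction to minimum degree $\geq 3$, then Lemmas~\ref{lem:mindegreeminor} and~\ref{lem:mindegreepacking}. The only cosmetic difference is that the paper performs the short-cycle reduction \emph{before} the degree reductions, so that when a degree-$2$ vertex $v$ is encountered the graph is already triangle-free and one can contract an edge $uv$ while staying in simple graphs; your choice to suppress $v$ and work in multigraphs throughout is an equally valid variant, and your remark that any loops or parallel edges created are immediately absorbed by the short-cycle branch is exactly what is needed to make it go through.
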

\begin{proof}
We proceed by induction on $|V(G)|$, the base case $|V(G)|=0$ being clearly true for any $k$. Let $k\geq 1$ and let $G$ be a non-empty graph. Suppose first that $G$ has girth at most $g(k)$. Let $C$ be a shortest cycle in $G$ and consider the graph $G':=G-V(C)$. Note that $|V(C)|\leq g(k)$. By induction, either $G'$ contains $k-1$ vertex-disjoint cycles or there exists a set $F'\subseteq V(G')$ of size at most $(k-1)\cdot g(k-1)$ such that $G'-F'$ has no cycles. In the former case we can add $C$ to the collection to obtain $k$ vertex-disjoint cycles, as desired. In the latter case $G-F$ has no cycles, where $F:= F' \cup V(C)$ has size at most $|F'|+|V(C)|\leq  (k-1)\cdot g(k-1) + g(k) \leq  k \cdot g(k)$, as desired. Thus we may assume from now on that $G$ has girth at least $g(k)+1$.

If $G$ has a vertex $v$ of degree at most $1$, then no cycle visits $v$, so we are done by induction applied to $G-v$. If $G$ has a degree-$2$ vertex $v$ with neighbour $u$, then (since $G$ is triangle-free) we are done by induction applied to the graph obtained by contracting the edge $uv$.

Thus we may assume that $G$ has minimum degree at least $3$ and hence, by Lemma~\ref{lem:mindegreeminor}, $G$ has a minor $G^*$ of minimum degree at least  $3\cdot 2^{(g(k)-2)/8} \geq 3k$. Therefore Lemma~\ref{lem:mindegreepacking} yields $k$ vertex-disjoint cycles in $G^*$. 
Lifting these cycles to $G$ yields $k$ vertex-disjoint cycles in $G$, as desired. 
\end{proof}

The proof of the edge-variant of Theorem~\ref{thm:vertexcycle} is verbatim the same, apart from the canonical replacements of vertex sets with edge sets. In particular, in this case the induction goes on the number of edges. At the end of the proof, we still obtain $k$ \emph{vertex}-disjoint cycles in $G$; note that they are edge-disjoint as well.

\section{Edge-Erd\H{o}s-P\'osa for long cycles}
\label{sec:long_cycles}

In this section we prove the edge-Erd\H{o}s-P\'osa theorem for long cycles.

First we prove two auxiliary lemmas. The first of these lemmas provides a minimum degree condition for finding many vertex-disjoint long cycles:

\begin{lemma}\label{lem:disjointlongcycles}
Let $k \geq 1$ and  $\ell\geq 3$ be integers and let $G$ be a graph of minimum degree at least $k\ell-1$. Then $G$ contains $k$ vertex-disjoint cycles that are each of length at least $\ell$.
\end{lemma}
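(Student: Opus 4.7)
The plan is to proceed by induction on $k$. For the base case $k=1$, the classical longest-path argument gives a cycle of length at least $\ell$ in any graph of minimum degree $\geq \ell - 1$: if $v_0 v_1 \cdots v_p$ is a longest path in $G$, then all of $v_0$'s neighbors lie on it, so some neighbor $v_j$ has $j \geq \ell - 1$, whence $v_0 v_1 \cdots v_j v_0$ is a cycle of length at least $\ell$. For the inductive step with $k \geq 2$, the plan is to exhibit a single cycle $C$ of length at least $\ell$ such that every vertex in $V(G) \setminus V(C)$ has at most $\ell$ neighbors in $V(C)$. Then $G - V(C)$ has minimum degree at least $k\ell - 1 - \ell = (k-1)\ell - 1$, and the induction hypothesis applied to $G - V(C)$ supplies the remaining $k - 1$ vertex-disjoint cycles of length at least $\ell$, which together with $C$ complete the packing.

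To obtain such a cycle $C$, I would take it to be a \emph{shortest} cycle of length at least $\ell$ in $G$ and write $c := |V(C)|$. The case $c = \ell$ is immediate since $|N_G(u) \cap V(C)| \leq |V(C)| = \ell$, so the substantive case is $c > \ell$. Fix $u \in V(G) \setminus V(C)$ with $s$ neighbors $u_1, \ldots, u_s$ on $C$ in cyclic order, and let $a_1, \ldots, a_s$ be the arc lengths between consecutive neighbors around $C$, so $a_1 + \cdots + a_s = c$. For any $r$ consecutive arcs with $1 \leq r \leq s - 1$, the corresponding sub-arc of $C$ together with the two edges from $u$ at its endpoints forms a cycle through $u$ of length equal to the arc sum plus $2$. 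Minimality of $C$ forces each such cycle to have length either strictly less than $\ell$ or at least $c$, so every such arc sum lies in $[1, \ell - 3] \cup [c - 2, c - 1]$. Equivalently, setting $T_i := a_1 + \cdots + a_i$, no two of the partial sums $T_0 = 0 < T_1 < \cdots < T_s = c$ may differ by a value in the forbidden range $[\ell - 2, c - 3]$ (except for the pair $(T_0, T_s)$ itself). A combinatorial analysis of this constraint on $s+1$ distinct integers in $[0, c]$ then yields $s \leq \ell$.

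The main obstacle is this interval bookkeeping: the shape of the admissible configurations for the $T_i$ depends on the relation between $c$ and $2\ell$, with different behavior in the regime $c \geq 2\ell - 5$ (where each inner $T_i$ must lie in the four extreme values $\{1, 2, c-2, c-1\}$, and cross-pair constraints further cut this down) and the regime $\ell < c \leq 2\ell - 6$ (where an additional middle interval $[c-\ell+3, \ell-3]$ contributes). The small cases $\ell \in \{3,4\}$, where the left interval $[1, \ell-3]$ degenerates, need a brief separate check. Once the bound $|N_G(u) \cap V(C)| \leq \ell$ is established in all regimes, the induction closes immediately.
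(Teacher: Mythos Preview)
Your overall plan---induction on $k$, the longest-path base case, choosing $C$ as a shortest cycle of length at least $\ell$, and bounding the number of neighbours any outside vertex has on $C$---is exactly the paper's approach.

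The only difference is in how you establish $|N_G(u)\cap V(C)|\le \ell$. You set up a partial-sum framework and flag the resulting ``interval bookkeeping'' (with regimes $c\ge 2\ell-5$ versus $c\le 2\ell-6$, plus separate checks for $\ell\in\{3,4\}$) as the main obstacle. In fact no case analysis is needed: already the constraints $T_i-T_0\notin[\ell-2,c-3]$ alone force each inner $T_i$ into $\{1,\dots,\ell-3\}\cup\{c-2,c-1\}$, a set of at most $\ell-1$ distinct integers, whence $s-1\le \ell-1$. This is precisely the paper's two-line argument, phrased differently: label $C=c_1c_2\cdots c_{|C|}$ with $c_1$ a neighbour of $u$; if $u$ had at least $\ell+1$ neighbours on $C$, then by pigeonhole some neighbour $c_i$ satisfies $\ell-1\le i\le |C|-2$, and $c_1c_2\cdots c_i u$ is a long cycle shorter than $C$. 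So your framework is correct but substantially overbuilt; the pairwise constraints for general $(T_i,T_j)$ and the regime split are never needed.
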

\begin{proof}
We proceed by induction on $k$. Starting with the base case $k=1$, let $G$ be a graph of minimum degree at least $\ell-1$. Then a maximal path $P$ in $G$ has at least $\ell$ vertices. Since all neighbours of the first vertex of $P$ are on $P$, we know that $G$ has a cycle of length at least $\ell$, as desired. 
Now let $k>1$ and assume that the lemma holds true for all values smaller than $k$. Let $G$ be a graph of minimum degree at least $k\ell-1$ and let $C$ be a shortest cycle of length at least $\ell$. Suppose for a contradiction that some $v \in V(G)\backslash V(C)$ has at least $\ell+1$ neighbours on $C:= c_1, c_2, \ldots, c_{|C|}$. In particular, we must have $|C|\geq \ell+1$.  Without loss of generality, $c_1$ is a neighbour of $v$. Furthermore, $c_i$ is a neighbour of $v$, for some $i\in \left\{\ell-1, \ell, \ldots, |C|-2\right\}$, since otherwise $v$ would have less than $\ell+1$ neighbours on $C$. But then $c_1,c_2,\ldots, c_i, v$ is a cycle of length at least $\ell$ and at most $|C|-1$, contradicting the minimality of $C$. 
We conclude that every vertex of $V(G)\backslash V(C)$ has at most $\ell$ neighbours in $V(C)$. Hence the minimum degree of $G-V(C)$ is at least $(k-1)\ell-1$ and so, by induction, $G-V(C)$ contains $k-1$ vertex-disjoint cycles of length at least $\ell$. Together with $C$, these cycles yield the desired subgraph of $G$.
\end{proof}

We remark that by the main result of~\cite{CLNWY17}, in fact the conclusion of Lemma~\ref{lem:disjointlongcycles} also holds for every graph with \emph{average} degree larger than $k(\ell+1)-2$, which is sharp.\\

The following auxiliary lemma will be applied in the main theorem to show that certain subgraphs of a minimum counterexample contain only short cycles.

\begin{lemma}\label{lem:diameter}
Let $\ell$ and $g \geq 2\ell-1$ be positive integers. Let $G$ be a multigraph with diameter $<g/2$ such that every cycle of $G$ has either length $< \ell$ or length $>g$. Then every cycle of $G$ in fact has length $< \ell$.
\end{lemma}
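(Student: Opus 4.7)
The plan is to argue by contradiction. Suppose $G$ contains a cycle of length $>g$, and let $C$ be one of minimum length $L>g$ among all such cycles. Pick $u,v\in V(C)$ with arc-distance $d_C(u,v)=\lceil g/2\rceil$ on $C$ (which is possible since one checks $\lceil g/2\rceil\leq\lfloor L/2\rfloor$). Both arcs of $C$ from $u$ to $v$ then have length at least $\lceil g/2\rceil$, which is at least $\ell$ by $g\geq 2\ell-1$. Let $P$ be a shortest $u$-$v$ path in $G$; the diameter hypothesis gives $|P|<g/2\leq\lceil g/2\rceil$, so $P$ is strictly shorter than either arc.

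If $P$ is internally disjoint from $C$, then $P$ together with the shorter arc of $C$ forms a cycle of length $|P|+\lceil g/2\rceil$. Since $1\leq|P|\leq\lceil g/2\rceil-1$, this length lies in $[\ell+1,g]$, contradicting the hypothesis.

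If $P$ has internal vertices on $C$, decompose $P=P_1\cdots P_k$ where each $P_i$ is a maximal subpath of $P$ internally disjoint from $C$, joining consecutive intersection points $w_{i-1},w_i$ with $w_0=u$ and $w_k=v$. Each $P_i$ is itself a shortest path, so $|P_i|\leq a_i:=d_C(w_{i-1},w_i)\leq L/2$. For each $i$, $P_i$ together with the longer arc of $C$ between $w_{i-1}$ and $w_i$ is a cycle of length $|P_i|+L-a_i\in[L/2+1,L]$. Now $L/2+1\geq(g+3)/2>\ell$, so by the hypothesis this length cannot lie in $[\ell,g]$; hence it is $>g$, and the minimality of $L$ forces it to equal $L$. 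Thus $|P_i|=a_i$ for every $i$.

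To conclude, parameterize $C$ by $\mathbb{Z}/L\mathbb{Z}$ with $u=w_0$ at position $0$ and $v=w_k$ at position $\lceil g/2\rceil$. Each step from $w_{i-1}$ to $w_i$ on $C$ shifts the position by $\pm a_i \pmod{L}$, so $\sum_{i=1}^{k}\pm a_i\equiv\lceil g/2\rceil\pmod{L}$. As an integer, the sum on the left has absolute value at most $\sum a_i=\sum|P_i|=|P|<g/2$, hence lies in $(-g/2,g/2)\cap\mathbb{Z}$, whose residues modulo $L$ all belong to $[0,g/2)\cup(L-g/2,L)$. Since $L>g$, $\lceil g/2\rceil$ lies in neither interval, a contradiction. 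The main obstacle is the internal-intersection case; the key is to exploit minimality of $L$ to pin down $|P_i|=a_i$, reducing the problem to a modular-displacement impossibility on $C$.
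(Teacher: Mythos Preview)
Your proof is correct and follows essentially the same route as the paper: take a minimum-length long cycle $C$, a shortest path $P$ between two far vertices of $C$, decompose $P$ at its intersections with $C$, and use minimality of $C$ on each segment together with the complementary arc of $C$ to force $|P_i|\geq d_C(w_{i-1},w_i)$. The paper's write-up is a bit slicker: it picks $u,v$ antipodal on $C$ and finishes with the triangle inequality for $d_C$ (yielding $\lfloor L/2\rfloor=d_C(u,v)\le\sum d_C(w_{i-1},w_i)\le |P|<g/2$), which subsumes your Case~1 and replaces your modular-displacement argument.
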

\begin{proof}
For a contradiction, assume that $G$ has a cycle of length at least $\ell$, and let $C$ denote such a cycle of minimum length. Note that $|C| > g$.
Consider two vertices $u,v \in V(C)$ that are at maximum distance  with respect to the subgraph $C$. Let $P$ be a shortest path in $G$ joining $u$ and $v$. Starting from $p_1:=u$, let $p_i$ denote the $i-$th vertex of $P$ that is on $C$. Observe that for all $i$, the subpath $P_i$ of $P$ joining $p_i$ and $p_{i+1}$ has no internal vertex on $C$.

Suppose that $P_i$ has strictly less than $d_C(p_i,p_{i+1})$ edges, for some $i\geq 1$. Then $P_{i} \cup C$ contains two cycles that are smaller than $C$, the largest of which has length at least $\frac{|C|}{2} > g/2 \geq \ell-\frac{1}{2} $. This contradicts the minimality of $C$. We conclude that $d_P(p_i,p_{i+1})\geq d_C(p_i,p_{i+1})$, for all $i\geq 1$.
 Combining this with the triangle inequality, we obtain $d_C(u,p_j) \leq \sum_{i=1}^{j-1} d_C(p_i,p_{i+1}) \leq \sum_{i=1}^{j-1} d_P(p_i,p_{i+1}) = d_P(u,p_j)$, for all $j\geq 2$. In particular, it follows that $g/2 \leq \lfloor \frac{|C|}{2} \rfloor = d_C(u,v) \leq d_P(u,v) \leq \text{diam}(G) <  g/2$; contradiction.
\end{proof}

For all integers $\ell\geq 2$ and  $k\geq 1$, we define $g(k,\ell):=8(\ell-1) \cdot (\lceil\log_2(k\ell)\rceil+1)$ and $g(0,\ell):=0$. We are now ready to prove our main theorem, a slightly refined version of Theorem~\ref{thm:longcycles}.

\begin{thm}\label{thm:main}
Let $\ell\geq 2$ be an integer. Call a cycle \emph{long} if it has length at least $\ell$.  Then for every multigraph $G$ and every nonnegative integer $k$, either $G$ contains $k$ edge-disjoint long cycles or there is a set $F\subseteq E(G)$ of size at most $k \cdot g(k,\ell)$ such that $G-F$ has no long cycle. 
\end{thm}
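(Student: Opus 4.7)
The plan is to proceed by induction on $|E(G)|$, following the template of Section~\ref{sec:sketch} and mimicking the proof of Theorem~\ref{thm:vertexcycle}. The base case $|E(G)|=0$ is immediate with $F=\emptyset$. For the inductive step, I would first apply two reductions. If $G$ contains a long cycle $C$ with $|C| \leq g(k,\ell)$, then apply induction to $G-E(C)$ with parameter $k-1$: either adjoin $C$ to the resulting $k-1$ edge-disjoint long cycles, or take $F := F' \cup E(C)$, whose size $(k-1)g(k-1,\ell) + g(k,\ell)$ is at most $k\cdot g(k,\ell)$ by monotonicity of $g$ in $k$. Hence we may assume every long cycle has length $>g(k,\ell)$. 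Next, any edge lying on no long cycle may be deleted by induction, and standard degree reductions (deletion of pendant vertices, suppression of degree-two vertices) preserve long cycles since $g(k,\ell)>\ell$. After these reductions, $G$ has minimum degree at least $3$, every edge lies on a long cycle, and every long cycle has length $>g(k,\ell)=8m$, where $m:=(\ell-1)(\lceil\log_2(k\ell)\rceil+1)$.

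The ball-packing step then mirrors Lemma~\ref{lem:mindegreeminor}: take a maximal set $X\subseteq V(G)$ with pairwise distances at least $2m+1$ and extend the disjoint balls $B_m(x)$ to a connected partition $(T(x))_{x\in X}$ via BFS, so that each $T(x)$ has diameter at most $4m<g(k,\ell)/2$. By Lemma~\ref{lem:diameter}, every cycle contained in $T(x)$ has length less than $\ell$. Contracting each $T(x)$ and simplifying yields a minor $G^*$ of $G$; if $G^*$ has minimum degree at least $k\ell-1$, then Lemma~\ref{lem:disjointlongcycles} supplies $k$ vertex-disjoint cycles of length at least $\ell$ in $G^*$, and each such cycle lifts through the corresponding $T(x)$'s to a cycle in $G$ of length at least its length in $G^*$, namely at least $\ell$. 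This delivers $k$ vertex-disjoint (hence edge-disjoint) long cycles in $G$.

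The main obstacle is the minimum degree bound on $G^*$. In the classical case, high girth ensured both that each $T(x)$ was a tree and that distinct balls shared at most one edge, giving instant exponential growth. Here $T(x)$ may contain short cycles, and two balls may be joined by several edges. I would resolve this in two steps. First, a \emph{growth lemma}: since every short cycle has length less than $\ell$ and every edge lies on a cycle of length $>8m$, any prolonged absence of progress in the BFS layers within $T(x)$ would create a cycle of length in the forbidden range $[\ell,g(k,\ell)]$; consequently, over any window of $\ell-1$ consecutive levels the boundary at least doubles. With $m=(\ell-1)(\lceil\log_2(k\ell)\rceil+1)$, this yields at least $2^{\lceil\log_2(k\ell)\rceil+1}\geq 2k\ell$ vertices at distance $m$ from $x$. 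Second, any two edges joining $T(x)$ to $T(y)$ form a cycle of length at most $8m+2$ which, by the cycle-length dichotomy and further localized reductions, is either genuinely short (forcing strong structural constraints removable by a further reduction) or a long cycle directly usable in the packing. Combining these two estimates yields that each $T(x)$ is adjacent in $G^*$ to at least $k\ell$ distinct other balls, providing the required minimum degree and completing the proof.
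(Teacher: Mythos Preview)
Your high-level outline matches the paper's: reduce so that every long cycle has length $>g(k,\ell)$, build a ball packing of radius $r=g(k,\ell)/8$, apply Lemma~\ref{lem:diameter} to each ball, contract, and invoke Lemma~\ref{lem:disjointlongcycles}. The gap is that the two steps you label ``growth lemma'' and ``few edges between balls'' are exactly where all the work lies, and your sketch does not supply the ideas needed to carry them out.

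\emph{Growth.} You assert that ``any prolonged absence of progress in the BFS layers would create a cycle of length in the forbidden range $[\ell,g(k,\ell)]$'', hence the boundary doubles every $\ell-1$ levels. This is not evident: minimum degree $\geq 3$ controls nothing inside a ball, since short cycles can absorb the branching, and ``every edge lies on a long cycle'' only tells you that long cycles exist somewhere, not that stagnation in a BFS shell produces a medium-length cycle locally. The paper obtains the doubling by passing to the \emph{block tree} of $H(x)$: each block has diameter $\leq(\ell-1)/2$, so growth in BFS levels translates into growth in the block tree, and the crux becomes showing that the block tree has no two \emph{consecutive} degree-$2$ vertices (Claim~\ref{clm:consecutiveblocks}). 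That claim is proved by contracting one of the two offending blocks and using minimality---but the lifting of $k$ edge-disjoint long cycles back through the contracted block requires a careful matching argument between the numbers $p_1\leq p_2$ of edge-disjoint cut-vertex-to-cut-vertex paths in the two blocks. This contraction-and-rerouting step is the heart of the proof, and nothing in your sketch points toward it.

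\emph{Edges between balls.} Two edges from $T(x)$ to $T(y)$ do close up into a cycle of length $\leq 8m+2$, but that cycle may well have length $<\ell$ if the two edges are close together; your proposed ``further reduction'' to dispose of this case is unspecified. The paper circumvents the issue by not counting all edges out of $H(x)$, but only leaf-blocks of $H(x)$ indexed by distinct \emph{pre-leaves} (leaf-blocks of the truncated subgraph $H^*(x)$ on $B_{r-\ell}(x)$). Distinct pre-leaves force the connecting path inside $H(x)$ to have length $\geq\ell$, so the resulting cycle is long and therefore forbidden. The growth argument then needs to produce $k\ell$ pre-leaves rather than $k\ell$ boundary vertices, which is again why the block-tree formulation is the right one.

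In short, your proposal identifies the correct architecture but is missing the block-tree/pre-leaf machinery and, most importantly, the contraction argument behind Claim~\ref{clm:consecutiveblocks}; without these, neither the growth bound nor the minimum-degree bound on $G^*$ goes through.
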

\begin{proof}
Throughout, we abbreviate $g(k):=g(k,\ell)$. 
Aiming for a contradiction, we assume the theorem is false; let $G$ be a counterexample minimizing $|E(G)|+|V(G)|$ and let $k\geq 1$ be such that $G$ does not contain $k$ edge-disjoint long cycles, nor a hitting set of size at most $k\cdot g(k)$. \\ 

Suppose first that $G$ contains a long cycle $C$ of length at most $g(k)$ and consider the graph $G'=G-E(C)$.  Since $G'$ is not a counterexample, it either contains $k-1$ edge-disjoint long cycles, or a set $F'\subseteq E(G')$ of size $|F'| \leq (k-1) \cdot g(k-1)$ such that $G'-F'$ has no long cycles. 
In the former case we can add $C$ to obtain $k$ edge-disjoint long cycles in $G$; contradiction. In the latter case, $F:=F' \cup E(C)$ is a hitting set for long cycles in $G$, while $F$ has size $|F'|+|E(C)| \leq (k-1) \cdot g(k-1) + g(k) \leq k\cdot g(k)$; contradiction. Therefore we may assume from now on that every long cycle in $G$ in fact has length larger than $g(k)$. Since a vertex of degree at most $1$ is not visited by any long cycle, we may also assume that the minimum degree of $G$ is at least $2$.\\
 
Next, we partition $V(G)$ into a collection of sets such that each of those sets induces a connected subgraph, as follows. Let $X$ be a maximal collection of vertices that are pairwise at distance at least $g(k)/4$. For each $x\in X$ and $s\in \mathbb{N}$, let $B_s(x)$ denote the set of vertices that are at distance at most $s$ from $x$. We will refer to $B_s(x)$ as the \emph{ball} of radius $s$ centered at $x$. Of particular interest are the balls of radius $r:= g(k)/8$. The balls $(B_r(x))_{x\in X}$ do not necessarily partition $V(G)$, but we will fix this by extending them. First add each vertex at distance $r+1$ of $X$ to one of the balls to which it is adjacent. After that, add every vertex at distance $r+2$ from $X$ to one of the sets constructed in the previous step to which it is adjacent. This process is continued until every vertex is covered. For each $x \in X$, we denote by $H(x)$ the graph induced by the final set that is obtained from $B_r(x)$ in this way. 

By construction, every vertex of $H(x)$ is at distance less than $g(k)/4$ from $x$; otherwise such a vertex could have been added to $X$, contradicting its maximality. Hence the diameter of $H(x)$ is less than $g(k)/2$ and so by Lemma~\ref{lem:diameter}, $H(x)$ contains no long cycle. This in turn implies that every block of $H(x)$ has diameter at most $(\ell-1)/2$.\\

We now analyse the block-tree structure of $H(x)$. First, we recall and introduce a few definitions. Given a graph $H$, a \emph{block} is a maximal connected subgraph of $H$ without a cut-vertex; in particular, an edge can be a block. A \emph{leaf-block} of $H$ is a block of $H$ that contains at most one cut-vertex of $H$.\\

Given a block $L$ of $H(x)$, an \emph{ancestor-block of $L$} is a block of $H(x)$ that is distinct from $L$ and which shares at least one edge with a shortest path between $V(L)$ and $x$.

For a correct analysis, we will also need to consider a subgraph $H^*(x)$ which consists of those blocks that are `not too close to the boundary of $H(x)$'. More formally, we define $H^*(x)$ to be the graph induced by the union of all blocks of $H(x)$ that have non-empty intersection with the ball $B_{r-\ell}(x)$. Note that each block of $H^*(x)$ is also a block of $H(x)$. Each leaf-block of $H^*(x)$ will be called a \emph{pre-leaf}.

The pre-leaves play an important role in our analysis, because these are the objects that we will actually count. 
A useful fact to keep in mind is that every vertex in a pre-leaf of $H(x)$ has no neighbour in $H(y)$ for any $y\in X, y\neq x$, since the pre-leaf has diameter at most $(\ell-1)/2$ and hence is contained in $B_{r- \lceil \frac{\ell-1}{2} \rceil}(x)$, while $H(y)$ does not intersect $B_{r}(x)$.\\

We will prove the following four claims, which together imply the theorem.

\begin{claim}\label{clm:0}
Let $x\in X$. Each pre-leaf is an ancestor-block of at least one leaf-block of $H(x)$.
 Conversely, if a leaf-block  $L$ of $H(x)$ has an ancestor-block which is a pre-leaf, then this pre-leaf is unique and we call it the \emph{pre-leaf of} $L$. 
\end{claim}

\begin{claim}\label{clm:1}
Let $x\in X$ and let $L$ be a leaf-block of $H(x)$.  Then there exists an $y\in X$ such that $L$ and $H(y)$ are joined by an edge of $G$.
\end{claim}

\begin{claim}\label{clm:2}
Let $x,y \in X$ be distinct and let $L_1,L_2$ be two leaf-blocks of $H(x)$ that have distinct pre-leaves. 
 Then at most one of $L_1, L_2$ is joined to $H(y)$ by an edge of $G$.
\end{claim}

\begin{claim}\label{clm:3}
Let $x\in X$. Then $H(x)$ has at least $k\ell-1$ distinct pre-leaves.
\end{claim}

Before proving these four claims, we first show how they imply the theorem. Let $x\in X$. By Claims~\ref{clm:0} and~\ref{clm:3}, $H(x)$ has at least $k\ell-1$ leaf-blocks that have pairwise distinct pre-leaves.  Therefore Claim~\ref{clm:1} and~\ref{clm:2} yield at least $k\ell-1$ distinct vertices $y \in X$ such that $H(x)$ is joined to $H(y)$ by an edge of $G$.  Now let $G^*$ denote the graph obtained from $G$ by contracting $H(x)$, for each $x\in X$. It follows that the minimum degree of $G^*$ is at least $k\ell-1$ and hence, by Lemma~\ref{lem:disjointlongcycles}, $G^*$ contains $k$ vertex-disjoint long cycles. This in turn implies that $G$ contains $k$ edge-disjoint long cycles; contradiction.\\

It remains to prove Claims~\ref{clm:0},~\ref{clm:1}, \ref{clm:2} and \ref{clm:3}. For that, the following two auxiliary claims are useful.
\begin{claim}\label{clm:noleafblock}
Let $x \in X$. No block of $H(x)$ is a leaf-block of $G$.
\end{claim}
\begin{claimproof}[Proof]
Suppose for a contradiction that some block $L$ of $H(x)$ is a leaf-block of $G$. Then every cycle in $G$ that visits an edge of $L$ must in fact be a subgraph of $L$. However, $L$ is a subgraph of $H(x)$ and thus contains no long cycle. We conclude that every long cycle of $G$ must be edge-disjoint from $L$. Hence the multigraph obtained from $G$ by contracting $L$ to a vertex must also be a counterexample, contradicting the minimality of $G$. 
\end{claimproof}

\begin{claim}\label{clm:consecutiveblocks}
Let $x\in X$ and let $T_1$ and $T_2$ be two blocks of $H(x)$ that both intersect the ball $B_{r-\ell}(x)$. Suppose furthermore that $V(T_1)\cap V(T_2)= \left\{v\right\}$ for some cut-vertex $v$ of $H(x)$ whose neighbours are in $V(T_1)\cup V(T_2)$. Then at least one of $T_1, T_2$ contains at least three cut-vertices of $H(x)$.
\end{claim}
\begin{claimproof}[Proof]
Suppose for a contradiction that $T_1, T_2$ both contain at most two cut-vertices of $H(x)$. First we deal with the case that one of them, say $T_1$, contains at most one cut-vertex of $H(x)$. Since $T_1$ has diameter at most $(\ell-1)/2$ and intersects $B_{r-\ell}(x)$, it follows that $T_1$ has no neighbours outside $H(x)$ and is thus a leaf-block of $G$. This contradicts Claim~\ref{clm:noleafblock}.

We may thus assume that both $T_1$ and $T_2$ in fact contain exactly two cut-vertices of $H(x)$, one of which must be their common vertex $v$. Let $t_i \in V(T_i)$ denote the cut-vertex of $H(x)$ which is not $v$, for each $i\in \left\{1,2\right\}$. Observe that $t_1\neq t_2$, for else $T_1$ and $T_2$ would not be blocks.

For $i\in \left\{1,2\right\}$, we write $p_i$ for the maximum number of edge-disjoint paths in $T_i$ between $t_i$ and $v$. Without loss of generality, $p_1\leq p_2$.
Next, let $G^*$ be the graph obtained from $G$ by contracting $T_2$ to a vertex (which we will call $v^*$). 
Note that $G^*$ has fewer edges than $G$, and so satisfies Theorem~\ref{thm:main}. This means that $G^*$ either contains $k$ edge-disjoint long cycles or a small hitting set. We will now analyse these two cases separately.\\

First suppose that $G^*$ contains a collection $\mathcal{C}$ of $k$ edge-disjoint long cycles. To obtain a contradiction, it suffices to derive that then $G$ also has $k$ edge-disjoint long cycles. To that end, we partition $\mathcal{C}$ into three subcollections $\mathcal{C}_1$, $\mathcal{C}_2$ and $\mathcal{C}_3$. We define $\mathcal{C}_1$ as the set of long cycles that visit at least one edge of $T_1$ (and thus in particular visit $v^*$), while $\mathcal{C}_2$ contains the long cycles that visit $v^*$ but are not already in $\mathcal{C}_1$ and, finally, $\mathcal{C}_3$ contains the long cycles that do not visit $v^*$. 
Decontracting $v^*$ back to $T_2$ yields the following natural bijection $\sigma$ from $\mathcal{C}$ to edge-disjoint cycles in $G$. If $C\in \mathcal{C}_3$, then the decontraction does not affect $C$, so we just take $\sigma(C)=C$. 
If $C \in \mathcal{C}_2$, then $\sigma(C)$ is the cycle obtained from $C$ by identifying $v^*$ with $t_2$ (in particular, this ensures that $\sigma(C)$ is edge-disjoint from $T_1$ and $T_2$). The last case is slightly more elaborate. Using that $T_1$ contains no long cycle, it follows that each $C\in \mathcal{C}_1$ must have a subpath that traverses $T_1$ from $t_1$ to $v^*$. Since there are at most $p_1$ edge-disjoint such paths, it follows that $|\mathcal{C}_1| \leq p_1 \leq p_2$. 
In turn, this implies that we can associate each $C\in \mathcal{C}_1$ with a unique subpath $P_C$  of $T_2$ that joins $v$ and $t_2$, such that these paths are edge-disjoint. Here we use that the neighbours of $v$ lie in $V(T_1) \cup V(T_2)$.
Now fix $C\in \mathcal{C}_1$. The decontraction of $v^*$  naturally maps $C$ to a path $P$ (on the same edges) which joins $v$ and $t_2$ in $G-E(T_2)$. Since $P$ and $P_C$ have the same endpoints and are internally vertex-disjoint, we can combine them into a cycle;  we define $\sigma(C)$ to be that cycle. This concludes the definition of the mapping $\sigma$. 
Note that for all $C$, the length of $\sigma(C)$ is at least the length of $C$, so we indeed obtain a collection of long cycles.  
Moreover, edge-disjointness is preserved because we did not modify the edge-sets outside $T_2$, while inside $T_2$ we added edge-disjoint paths.\\

Second, suppose that $G^*-F$ has no long cycles, for some $F\subseteq E(G^*)$ of size at most $k \cdot g(k)$. To obtain a contradiction, it suffices to derive that then $G-F$ also has no long cycles. (With slight abuse of notation, we refer to $F$ as a subset of both $E(G)$ and $E(G^*)$.)
Suppose for a contradiction that $G-F$ has a long cycle $C$. Then $C$ must use an edge of $T_2$.
Since $T_2$ has no long cycle, we know that $C$ consists of two internally disjoint paths $P_1, P_2$ that each join $v$ and $t_2$, where $P_1$ is a path in the graph $G-\left( V(T_2)\backslash\left\{ v, t_2 \right\}\right)$, while $P_2$ is a path in $T_2$. Here we use again that $v$ is only adjacent to vertices in $T_1$ and $T_2$.
Moreover, because $B_r(x)\subseteq H(x)$ has no long cycle, $P_1$ must pick up a vertex outside of $B_r(x)$.
Together with the fact that $V(T_2)$ is contained in $B_{r- l/2}(x)$, this implies that $P_1$ has length at least $\ell$. 
It follows that the cycle obtained from $C$ by contracting $P_2$ is a long cycle in $G^*$. Hence $F$ intersects $P_1$. But then $C$ cannot be a subgraph of $G-F$; contradiction. 
\end{claimproof}

At last, we are ready to prove Claims~\ref{clm:0},~\ref{clm:1}, \ref{clm:2} and \ref{clm:3}.

\begin{claimproof}[Proof of Claim~\ref{clm:0}]
Each pre-leaf is the ancestor-block of some leaf-block of $H(x)$, for otherwise $G$ would have a leaf-block, contradicting Claim~\ref{clm:noleafblock}.

The uniqueness of the pre-leaf of $L$ follows because of the following two reasons. First, given a shortest path $P$ between $V(L)$ and $x$, at most one leaf-block of $H^*(x)$ can share an edge with $P$. Second, any two shortest paths between $V(L)$ and $x$ must visit the same set of blocks of $H(x)$; otherwise there would be a cycle in $H(x)$ visiting at least two distinct blocks, contradicting the maximality of those blocks. 
\end{claimproof}

\begin{claimproof}[Proof of Claim~\ref{clm:1}]
If some leaf-block $L$ of $H(x)$ has no neighbour in $G-H(x)$, then $L$ is also a leaf-block of $G$, contradicting Claim~\ref{clm:noleafblock}.
\end{claimproof}

\begin{claimproof}[Proof of Claim~\ref{clm:2}]
Suppose for a contradiction that an edge $e_i$ joining $L_i$ with $H(y)$ would exist for both $i\in \left\{1,2\right\}$. A shortest path $P$ in $H(x)$ between $e_1$ and $e_2$ has length at least $\ell$. Indeed, $P$ must visit a common ancestor-block of $L_1$ and $L_2$. This common ancestor-block $C$ must intersect $B_{r-\ell}(x)$, since the pre-leaves of $L_1$ and $L_2$ are distinct. Together with the fact that the diameter of $C$ is at most $(\ell-1)/2$, it follows that the distance between $e_i$ and $C$ is at least $\frac{\ell}{2}$, for both $i\in\left\{1,2\right\}$. Furthermore, the length of $P$ is strictly less than $g(k)/2$, because every vertex of $H(x)$ is at distance less than $g(k)/4$ from $x$.
For the same reason, a shortest path $Q$ in $H(y)$ joining $e_1$ and $e_2$ also has length less than $g(k)/2$. Thus (noting that  $g(k)/2$ is an integer) the cycle $Pe_1Qe_2$ would be a long cycle of length at most $g(k)$; contradiction.
\end{claimproof}

\begin{claimproof}[Proof of Claim~\ref{clm:3}]
Fix $x\in X$. We define an auxiliary tree $T$ whose vertices are those blocks of $H(x)$ that intersect the ball $B_{r-\ell}(x)$. Two blocks $B_1, B_2$ are adjacent in $T$ if they have a common vertex and either $B_1$ is an ancestor-block of $B_2$ or vice versa. 

By construction, the leaves of $T$ represent the pre-leaves of $H(x)$, so we need to prove that $T$ has at least $k\ell$ leaves. From Claim~\ref{clm:noleafblock} and the fact that every block of $H(x)$ has diameter at most $(\ell-1)/2$, it follows that every leaf of $T$ is at distance at least $\frac{2}{\ell-1} \cdot (r-\ell)$ from the root. 
Moreover, by Claim~\ref{clm:consecutiveblocks}, $T$ does not have any adjacent degree-$2$ vertices that are within distance $\frac{2}{\ell-1} \cdot (r-\ell)$ from the root. 
It follows that $T$ has at least $2^{\frac{1}{2}\cdot \frac{2}{\ell-1}(r-\ell)} \geq k\ell$ leaves, as desired.
\end{claimproof}
\end{proof}

\section{Open problems}
\label{sec:open_problems}

As already mentioned in the introduction, the main open problem regarding the edge-EP property is to characterize the graphs $J$ such that $J$-expansions have the edge-EP property.

\begin{question}[\cite{RST16,BHJ18_counterexamples}]
Which graphs $J$ are such that $J$-expansions have the edge-EP property? 
\end{question}

It is known that such graphs $J$ need to be planar. 
Also, $J$ cannot be a ladder of length (number of rungs) at least $71$ nor a subcubic tree of pathwidth at least $19$, see~\cite{BHJ18_counterexamples}. 

Going back to cycles, another intriguing open problem is as follows. 
Thomassen~\cite{Thomassen1988} has proved that for any integer $m\geq 2$ and $p\in \left\{0,\ldots, m-1\right\}$, the class of cycles of length $p$ (mod $m$) has the vertex-EP property if and only if $p=0$. It is natural to expect that the same holds for the edge-EP property. As in the vertex case, the edge-EP property indeed does not hold when $p \neq 0$, as follows from the construction in~\cite{Thomassen1988}. For $m\geq 3$, it is unknown whether $0$ (mod $m$) cycles have the edge-EP property. 

\begin{question}
For $m\geq 3$, do cycles of length $0$ (mod $m$) have the edge-EP property? 
\end{question}

If they do, it is natural to expect that the edge-EP property then holds with an $O(k \log k)$ bounding function (where the hidden constant factor depends on $m$), as in the vertex case~\cite{CHJR19}. 

We conclude with a comment on the $m=2$ case. 
It is known that even cycles have the edge-EP property.  
This was shown recently by Bruhn, Heinlein, and Joos~\cite{BHJ18_framesApaths}, with an $O(k^2 \log k)$ bounding function. 
We note that the approach used in the current paper easily leads to an $O(k \log k)$ bounding function (the reader might want to try to show this for herself). 
However, an even quicker way is to derive it as a corollary from the fact that $\theta_3$-expansions have the edge-EP property with an $O(k\log k)$ bounding function~\cite{CRST18}, the key observation being that every $\theta_3$-expansion contains an even cycle as a subgraph.

\begin{thm}
\label{thm:evencycles}
Even cycles have the edge-EP property with an $O(k\log k)$ bounding function. 
\end{thm}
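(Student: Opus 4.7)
The plan is to derive Theorem~\ref{thm:evencycles} as a direct corollary of the edge-EP property of $\theta_3$-expansions from~\cite{CRST18}, which comes with an $O(k\log k)$ bounding function. Applying that result to the input graph $G$ with the same parameter $k$ gives the dichotomy: either $G$ contains $k$ edge-disjoint $\theta_3$-expansions, or there is an edge set $F \subseteq E(G)$ of size $O(k\log k)$ such that $G - F$ contains no $\theta_3$-expansion. I would then handle each of the two cases separately.

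In the first case, I use the observation highlighted in the paper that every $\theta_3$-expansion contains an even cycle as a subgraph. Since $\theta_3$ is subcubic, every $\theta_3$-expansion in fact contains a $\theta_3$-subdivision, i.e., two branch vertices $u, v$ joined by three internally disjoint paths $P_1, P_2, P_3$. By pigeonhole at least two of the lengths $|P_1|, |P_2|, |P_3|$ have the same parity, so the cycle formed by the corresponding pair of paths is even. Selecting one such even cycle inside each of the $k$ edge-disjoint $\theta_3$-expansions immediately yields $k$ edge-disjoint even cycles in $G$.

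In the second case, I invoke the classical structural fact that a graph with no $\theta_3$-minor is a \emph{cactus}, i.e., every block is either a bridge or a single cycle. (This is because any $2$-connected graph that is not itself a cycle admits an open ear decomposition whose first ear, together with the two arcs of the initial cycle between its endpoints, forms a $\theta_3$-subdivision.) Since distinct cycles in a cactus lie in distinct blocks, they are pairwise edge-disjoint. Hence if $G - F$ contains at least $k$ even cycles then those $k$ cycles are already edge-disjoint in $G$ and we are done; otherwise $G - F$ contains at most $k-1$ even cycles, and picking one edge from each of them and adding those edges to $F$ produces a hitting set for all even cycles of $G$ of total size at most $|F| + (k-1) = O(k\log k)$.

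No serious obstacle is anticipated: the entire argument is a short two-case reduction once the structural facts are in place, and the only checks needed are the pigeonhole observation about $\theta_3$-subdivisions and the well-known characterization of $\theta_3$-minor-free graphs as cacti.
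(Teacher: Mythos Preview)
Your proposal is correct and follows essentially the same route as the paper: apply the $O(k\log k)$ edge-EP property for $\theta_3$-expansions from~\cite{CRST18}, extract an even cycle from each $\theta_3$-expansion via parity pigeonhole in the first case, and in the second case use that cycles in a $\theta_3$-minor-free graph are pairwise edge-disjoint to either find $k$ edge-disjoint even cycles or augment $F$ by fewer than $k$ edges. The only cosmetic difference is that the paper phrases the edge-disjointness of cycles in $G-F$ directly (two edge-sharing cycles would contain a $\theta_3$-expansion) rather than via the cactus characterization, but this is the same observation.
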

\begin{proof}
Every $\theta_3$-expansion contains two vertices joined by three internally vertex-disjoint paths, and at least two of these paths have either both odd length or both even length. Hence every $\theta_3$-expansion contains an even cycle.

Consider a graph $G$ and let $k\geq 1$. 
Suppose first that $G$ contains $k$ edge-disjoint $\theta_3$-expansions. Then $G$ also has $k$ edge-disjoint even cycles, as desired. Thus we may assume that $G$ does not contain $k$ edge-disjoint $\theta_3$-expansions and so by~\cite{CRST18}, there is a set $F$ of $O(k \log k)$ edges such that $G-F$ has no $\theta_3$-expansion. 
Note that every two cycles in $G-F$ are edge-disjoint; otherwise their union would contain a $\theta_3$-expansion. Let $t$ denote the number of even cycles in $G-F$. If $t\geq k$, then we find $k$ edge-disjoint even cycles in $G$, as desired, so we may assume that $t<k$. Selecting an edge from each even cycle of $G-F$, we obtain a set $F_2$ of $t$ edges such that $G-(F \cup F_2)$ has no even cycle. It remains to note that $|F \cup F_2| =O (k \log k + k)= O(k \log k)$.
\end{proof}

\section*{Acknowledgements}
We are grateful to Eunjung Kim, Micha\l{} Seweryn, and two anonymous referees for their comments on a previous version of the paper, which improved the paper.  

\bibliographystyle{abbrv}
\bibliography{edgeEP}
\end{document}